\title[Embeddings into cubical shifts]{On embeddings of extensions of almost finite actions into cubical shifts}
\author{Emiel Lanckriet}
\address{Department of Computer Science, KU Leuven, Celestijnenlaan 200a\linebreak
\phantom{-}\hspace{2mm} box 2402, B-3001 Leuven, Belgium}
\email{emiel.lanckriet@kuleuven.be}
\author{Gábor Szabó}
\email{}
\address{Department of Mathematics, KU Leuven, Celestijnenlaan 200b, box 2400\linebreak
\phantom{-}\hspace{2mm} B-3001 Leuven, Belgium}
\email{gabor.szabo@kuleuven.be}
\subjclass[2020]{37B05}
\begin{document}

\newcommand{\into}[0]{\ensuremath{\hookrightarrow}}
\newcommand{\onto}[0]{\ensuremath{\twoheadrightarrow}}
\newcommand{\eps}[0]{\varepsilon}

\newcommand{\Pfin}[0]{\mathcal{P}_{\mathrm{fin}}}
\newcommand{\mdim}[0]{\ensuremath{\mathrm{mdim}}}
\newcommand{\mesh}[0]{\ensuremath{\mathrm{mesh}}}
\newcommand{\widim}[0]{\ensuremath{\mathrm{widim}}}

\newtheorem{satz}{Satz}[section]		
\newaliascnt{corCT}{satz}
\newtheorem{cor}[corCT]{Corollary}
\aliascntresetthe{corCT}
\providecommand*{\corCTautorefname}{Corollary}
\newaliascnt{lemmaCT}{satz}
\newtheorem{lemma}[lemmaCT]{Lemma}
\aliascntresetthe{lemmaCT}
\providecommand*{\lemmaCTautorefname}{Lemma}
\newaliascnt{propCT}{satz}
\newtheorem{prop}[propCT]{Proposition}
\aliascntresetthe{propCT}
\providecommand*{\propCTautorefname}{Proposition}
\newaliascnt{theoremCT}{satz}
\newtheorem{theorem}[theoremCT]{Theorem}
\aliascntresetthe{theoremCT}
\providecommand*{\theoremCTautorefname}{Theorem}
\newtheorem*{theoreme}{Theorem}

\theoremstyle{definition}

\newaliascnt{conjectureCT}{satz}
\newtheorem{conjecture}[conjectureCT]{Conjecture}
\aliascntresetthe{conjectureCT}
\providecommand*{\conjectureCTautorefname}{Conjecture}
\newaliascnt{defiCT}{satz}
\newtheorem{defi}[defiCT]{Definition}
\aliascntresetthe{defiCT}
\providecommand*{\defiCTautorefname}{Definition}
\newaliascnt{remCT}{satz}
\newtheorem{rem}[remCT]{Remark}
\aliascntresetthe{remCT}
\providecommand*{\remCTautorefname}{Remark}
\newaliascnt{exampleCT}{satz}
\newtheorem{example}[exampleCT]{Example}
\aliascntresetthe{exampleCT}
\providecommand*{\exampleCTautorefname}{Example}


\begin{abstract} 
For a countable amenable group $G$ and a fixed dimension $m\geq 1$, we investigate when it is possible to embed a $G$-space $X$ into the $m$-dimensional cubical shift $([0,1]^m)^G$.
We focus our attention on systems that arise as an extension of an almost finite $G$-action on a totally disconnected space $Y$, in the sense of Matui and Kerr.
We show that if such a $G$-space $X$ has mean dimension less than $m/2$, then $X$ embeds into the $(m+1)$-dimensional cubical shift.
If the distinguished factor $G$-space $Y$ is assumed to be a subshift of finite type, then this can be improved to an embedding into the $m$-dimensional cubical shift.
This result ought to be viewed as the generalization of a theorem by Gutman--Tsukamoto for $G=\mathbb Z$ to actions of all amenable groups, and represents the first result supporting the Lindenstrauss--Tsukamoto conjecture for actions of groups other than $G=\mathbb{Z}^k$.
\end{abstract}

\maketitle



\section*{Introduction}

It is a ubiquitous phenomenon in mathematics that if one deals with a category of objects with any kind of rich structure, there are often some natural distinguished examples that are \emph{large} enough to study conditions under which a general object embeds into the distinguished examples.
Depending on the precise context, the solution to such a problem can reveal an a priori surprising hierarchy present in the objects under consideration, or in the best case scenario give rise to new invariants that may have applications far beyond the original embedding problem.

In order to illustrate the historical importance of embedding problems, one need not look further than geometry and/or topology.
The Whitney embedding theorem, asserting that every $m$-dimensional Riemannian manifold embeds smoothly as a submanifold of $\mathbb R^{2m}$, was not only impactful for its statement, but introduced various concepts in its proof that remain fundamental in the area of differential geometry to this day.
The Menger--Nöbeling theorem, asserting that every compact metrizable space with covering dimension $m$ embeds continuously into the cube $[0,1]^{2m+1}$, not only generalizes this kind of phenomenon from the geometric context, but shows that covering dimension introduces a level of hierarchy among spaces having consequences beyond simply acting as a simple-minded obstruction to embeddings.

This short article aims to make progress on a similar embedding problem for topological dynamical systems, i.e., countably infinite discrete groups $G$ acting via homeomorphisms on compact metrizable spaces.
In this case the distinguished examples are given by so-called cubical shifts.
That is, given a natural number $m\geq 1$, we may consider $G$ acting on the space $([0,1]^m)^G$ by sending $g\in G$ to the homeomorphism $[(x_h)_{h\in G}\mapsto (x_{g^{-1}h})_{h\in G}]$; we refer to this as the \emph{$m$-dimensional cubical shift}.
A priori, it is not at all clear how to determine when a given action $\alpha: G\curvearrowright X$ embeds into such an example, not even how to determine that it does not.
Given that $X$ embeds into the Hilbert cube as a consequence of Urysohn--Tietze, say via $\iota: X\into [0,1]^{\mathbb N}$, it is a triviality to obtain the equivariant embedding into the analogous Hilbert cube shift $([0,1]^{\mathbb N})^G$ via $x\mapsto (\iota(\alpha_g(x)))_{g\in G}$.
Since $([0,1]^{\mathbb N})^G$ is actually homeomorphic to $([0,1]^{m})^G$, this begs the question how much of a hierarchy there really is between cubical shifts of different dimensions regarding the class of dynamical systems that embed into them.

The first really substantial embedding result was in the PhD thesis of Jaworski, who showed that every aperiodic homeomorphism on a finite-dimensional space (which we view as a free $\mathbb Z$-system) embeds equivariantly into $[0,1]^{\mathbb Z}$.
Later on this led to the question by Auslander, asking whether this holds for arbitrary aperiodic homeomorphisms on any space.
This problem remained open for over a decade before it was settled in the negative by the introduction of mean topological dimension, the ideas of which initially appeared in Gromov's work \cite{Gromov99} and were subsequently fleshed out by Lindenstrauss--Weiss \cite{LindenstraussWeiss00}.
Under the assumption that $G$ is amenable, every topological dynamical system $\alpha: G\curvearrowright X$ can be assigned its mean dimension $\mdim(X,\alpha)\in [0,\infty]$, which respects embeddings. (Although one should perhaps mention that mean dimension has since been extended to sofic groups \cite{Li13}, the methods in this paper reveal nothing new beyond the amenable case, hence we shall ignore sofic mean dimension here.)
In a nutshell, mean dimension is a dimensional analog of entropy and is designed to be useful for distinguishing systems of infinite topological entropy.
The conceptual difference between these notions can be summarized by the slogan that entropy measures the number of bits per second needed to describe points in a system, whereas mean dimension measures the number of real parameters per second.
From this intuitive perspective, it is not surprising that the mean dimension of every $m$-dimensional cubical shift is equal to $m$.
By the mere existence of free minimal actions with arbitrarily large mean dimension --- see \cite[§3]{LindenstraussWeiss00} and \cite{Krieger09} --- one gets plenty of examples that cannot embed into the $m$-dimensional cubical shift.
In a suprising twist at the time, Lindenstrauss in \cite{Lindenstrauss99} proved that (extensions of) minimal homeomorphisms with mean dimension less than $m/36$ do embed, however.
This has triggered the search for the optimal embedding result that can be seen as the dynamical generalization of the Menger--Nöbeling theorem.

Although the situation for completely general systems is rather subtle and unsolved even for $G=\mathbb Z$, there has been amazing progress for aperiodic or even minimal homeomorphisms.
Building on various substantial precursor results \cite{LindenstraussTsukamoto14, GutmanTsukamoto14, Gutman15, Gutman17}, the optimal embedding result was recently proved by Gutman--Tsukamoto \cite{GutmanTsukamoto20} for minimal homeomorphisms:\ Every minimal homeomorphism with mean dimension less than $m/2$ embeds into the $m$-dimensional cubical shift.
A generalization of this result for $\mathbb Z^k$-actions was successfully pursued in \cite{Gutman11, GutmanQiaoSzabo18, GutmanLindenstraussTsukamoto16, GutmanQiaoTsukamoto19}, the final approach of which involves extremely sophisticated tools from signal analysis to take advantage of the surrounding geometry for these groups.
As was noted in the introduction of \cite{GutmanTsukamoto20}, ``the generalization
to non-commutative groups seems to require substantially new ideas''.
Indeed there has been no progress on the embedding problem for dynamical systems over nonabelian groups to the best of the authors' knowledge, and this article aims to change that.
Our main result (\autoref{thm:embedding-result}+\autoref{cor:optimal-embedding}) asserts:

\begin{theoreme}
Suppose $G$ is a countable amenable group.
Let $\beta: G\curvearrowright Y$ be an almost finite action on a compact totally disconnected metrizable space.
Let $\alpha: G\curvearrowright X$ be an action on a compact metrizable space that arises as an extension of $\beta$.
Let $m \geq 1$ be a natural number and suppose that $\mdim(X, \alpha) < \frac{m}{2}$.
Then there exists an embedding of $G$-spaces $X\into ([0,1]^{m+1})^G$.
If $(Y,\beta)$ is assumed to be a subshift of finite type, then there exists an embedding of $G$-spaces $X\into ([0,1]^{m})^G$.
\end{theoreme}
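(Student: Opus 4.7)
The plan is to combine the castle decompositions afforded by almost finiteness with a general-position / Menger--Nöbeling argument in the spirit of Gutman--Tsukamoto, adapted to handle the absence of a natural orbit ordering in the nonabelian setting. First, I would invoke almost finiteness of $\beta$ to obtain, for arbitrarily small $\eps>0$ and arbitrarily invariant Følner shapes, a clopen castle decomposition of $Y$ into finitely many towers with clopen bases $B_i\subset Y$ and $(K,\delta)$-invariant shapes $S_i\subset G$. Lifting via the factor map $\pi\colon X\onto Y$ yields a castle in $X$ with the same shapes and bases $\tilde{B}_i:=\pi^{-1}(B_i)$. This reduces the task of producing an equivariant continuous map $\phi\colon X\to([0,1]^{m+1})^G$ to choosing, for each $i$, a continuous map $f_i\colon \tilde{B}_i\to ([0,1]^{m+1})^{S_i}$ and then extending $\phi$ tower-by-tower by equivariance.

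To make $\phi$ injective, I would exploit the assumption $\mdim(X,\alpha)<\tfrac{m}{2}$ to derive, for $S_i$ of sufficiently large size relative to $\eps$, the widim estimate $\widim_\eps(\tilde{B}_i)<\tfrac{m|S_i|}{2}$. A Baire-category / general-position argument, which can be viewed as a dynamical Menger--Nöbeling theorem along the lines of the Gutman--Tsukamoto proof for $\mathbb Z$, then produces generic choices of $f_i$ placing $\tilde{B}_i$ in general position inside a subset of $([0,1]^m)^{S_i}$ of dimension at most $\tfrac{m|S_i|}{2}$, which is sufficient for $\phi$ to separate orbits traversing a single tower. The extra $(m+1)$-th coordinate is devoted to encoding the tower index and thereby resolving collisions between points belonging to different towers; since each $B_i$ is clopen in the totally disconnected space $Y$, an equivariant continuous tower-colouring $X\to[0,1]^G$ is easy to define via the lifted indicator functions of the $B_i$ and remains coherent under refinement of the decomposition.

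When $(Y,\beta)$ is a subshift of finite type, $Y$ already embeds as a clopen shift-invariant subset of $\Sigma^G$ for a finite alphabet $\Sigma\subset[0,1]$, so the natural coordinate functions on $Y$ furnish an equivariant continuous injection $Y\into[0,1]^G$. One can then absorb the tower-colouring data into the $m$ target coordinates by running the general-position construction compatibly with the SFT coordinates, saving the extra dimension and yielding the sharper embedding into $([0,1]^m)^G$.

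The principal obstacle, and the plausible reason such results have so far resisted generalisation beyond $G=\mathbb Z^k$, lies in the compatibility of the general-position step with iterative refinement of the castle decomposition: the tower-wise maps $f_i$ must be chosen so that their adjustments under passage to finer castles leave the global $\phi$ injective and continuous in the limit. Almost finiteness is precisely the hypothesis supplying castle refinements with arbitrarily small boundary, enabling an iterative perturbative construction inside $C(\tilde{B}_i,([0,1]^m)^{S_i})$ whose limit retains injectivity at every scale; verifying that the Baire-category inputs survive this iteration, despite the lack of a global geometric substitute for the signal-analytic methods available in the $\mathbb Z^k$ case, is where I expect the most delicate work to lie.
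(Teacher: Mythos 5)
Your ingredients are the right ones (clopen castles from almost finiteness pulled back through $\pi$, the Gutman--Tsukamoto perturbation lemma applied tower-wise using $\widim_\eps(X,d^\alpha_{S_i})<|S_i|m/2$, an auxiliary coordinate for the totally disconnected data, and the trick of absorbing a finite alphabet into an interval for the SFT case), but the proposal has a genuine gap exactly where you flag ``the most delicate work'': you never resolve how injectivity is to be obtained across \emph{all} scales. Your scheme encodes only the tower indices of one castle in the extra coordinate and then hopes to pass to finer and finer castles by an iterative perturbative construction whose limit stays injective and continuous; nothing in the proposal explains why such compatible refinements exist or why the Baire-category inputs survive the iteration, and this is precisely the hard point, not a technicality. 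Moreover a single tower-colouring only tells you that two colliding points lie in the same level of \emph{that} castle, which yields closeness at one scale $\eps$, not injectivity; to conclude you would have to encode infinitely many castles coherently in one interval coordinate, which is what your iteration is implicitly trying to do and does not accomplish.

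The paper's proof avoids this entirely by a different organization: fix the factor map $\pi$ once and for all as the auxiliary data, and study maps of the form $I_f\times\pi: X\to ([0,1]^m)^G\times Y$ for $f\in\mathcal C(X,[0,1]^m)$. For each $\eta>0$ one shows the set $A_\eta$ of $f$ for which $I_f\times\pi$ is an $\eta$-embedding is dense (choose \emph{one} castle with shapes invariant enough that $\widim_\eps(X,d^\alpha_{S_i})<|S_i|m/2$, perturb tower-wise; if $(I_f\times\pi)(x)=(I_f\times\pi)(y)$ then $\pi(x)=\pi(y)$ already forces $x,y$ into the same tower level, since the castle is pulled back from $Y$) and open (a compactness estimate using a compatible metric $\sum_g c_g\|\cdot\|_\infty$ on $([0,1]^m)^G$). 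The Baire category theorem then yields an $f$ lying in $\bigcap_n A_{1/n}$, i.e.\ $I_f\times\pi$ is injective, with no compatibility between the castles used at different scales ever required. Finally, total disconnectedness gives a topological embedding $Y\into[0,1]$, hence an equivariant embedding $Y\into [0,1]^G$, producing $X\into([0,1]^{m+1})^G$; in the SFT case one does not modify the general-position step at all but simply post-composes with $([0,1]\times\{1,\dots,\ell\})^G\into[0,1]^G$ applied in one of the $m$ coordinates, giving $X\into([0,1]^m)^G$. Two smaller inaccuracies in your write-up: the widim bound per tower comes from sufficient $(K,\delta)$-invariance of the shapes (via the Ornstein--Weiss subadditivity theorem), not from their cardinality being large; and the general-position lemma is applied with target the full cube $([0,1]^m)^{S_i}$ --- it is the domain's $\eps$-widim, not the dimension of a subset of the target, that must be below $|S_i|m/2$.
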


In the context of the above theorem, we remark that the concept of almost finiteness for actions, introduced in \cite{Matui12, Kerr20} with a motivation towards $\mathrm{C}^*$-algebraic applications, is a kind of freeness property that is designed as a topological version of the Ornstein--Weiss lemma \cite{OrnsteinWeiss87} for free probability measure preserving actions.
Since it is by now known for a large class of groups that almost finiteness for $\beta$ follows if $\beta$ is assumed to be free (see \autoref{rem:almost-finite-actions}), our main result should be viewed as a generalization of Gutman--Tsukamoto's approach from \cite{GutmanTsukamoto14} to the setting of amenable groups.
This is indeed reflected not just in the similarity of the main result, but at the level of our proof.
More specifically, there are clear parallels between \autoref{lem:dense-eps-embeddings}, \autoref{thm:embedding-result} and \autoref{cor:optimal-embedding} on the one hand, and \cite[Proposition 3.1, Theorem 1.5, Corollary 1.8]{GutmanTsukamoto14} on the other hand.
In a nutshell, almost finiteness of $\beta$ in our proof acts as the correct substitute of the well-known clopen Rokhlin lemma for aperiodic homeomorphisms on the Cantor set.
We further point out that, to the best of our knowledge, this provides the first application of almost finiteness to prove a new result in topological dynamics that is entirely unrelated to questions about crossed product $\mathrm{C}^*$-algebras.

The problem whether the above result is true for all free actions $\alpha: G\curvearrowright X$, regardless of whether it admits well-behaved factor systems, remains open.
In light of the technical difficulties already present in the state-of-the-art for $\mathbb Z^k$, however, we expect this challenge to be rather difficult to tackle without ideas that go substantially beyond our present work.


\section{Preliminaries}

We start with some basic remarks on notation and terminology.

Throughout the article we fix a countable amenable group $G$.
We write $F\Subset G$ to mean that $F$ is a finite subset of $G$.
Given $K\Subset G$ and a constant $\delta>0$, we say that a non-empty set $F\Subset G$ is \emph{$(K,\delta)$-invariant}, if $|KF\setminus F|\leq\delta|F|$.
We will freely use the well-known characterization of amenability via the F{\o}lner criterion, i.e., $G$ is amenable precisely when every pair $(K,\delta)$ admits some $(K,\delta)$-invariant finite subset in $G$.
If $G$ is countable, we call a sequence $(F_n)_{n\in\mathbb N}$ with $F_n\Subset G$ a \emph{F{\o}lner sequence}, if for every pair $(K,\delta)$, there is some $n_0\in\mathbb N$ such that $F_n$ is $(K,\delta)$-invariant for all $n\geq n_0$.

The letters $X$ and $Y$ will always be reserved to denote compact metrizable spaces.
Under a \emph{topological dynamical system} (over $G$) or {\emph{$G$-space}} we understand a pair $(X,\alpha)$, where $X$ is a compact metrizable space and $\alpha: G\curvearrowright X$ is an action by homeomorphisms.
In some cases when there is no ambiguity on what action is considered on $X$, we sometimes just talk of the $G$-space $X$ to lighten notation.
An action $\alpha$ is called \emph{free} if for every point $x\in X$, its orbit map $[g\mapsto\alpha_g(x)]$ is injective.

Given another action $\beta: G\curvearrowright Y$, we say that a continuous map  $\phi: X\to Y$ is \emph{equivariant (w.r.t.\ $\alpha$ and $\beta$)}, if $\phi\circ\alpha_g=\beta_g\circ\phi$ for all $g\in G$, in which case we indicate this by writing $\phi: (X,\alpha)\to (Y,\beta)$.
Using the alternate arrow $\onto$ means that the map is surjective, whereas using $\into$ means that the map is injective, in which case we also speak of an embedding.
If we are given an equivariant surjective map $\pi: (X,\alpha)\onto (Y,\beta)$, then one calls $(Y,\beta)$ a \emph{factor} of $(X,\alpha)$ and refers to $\pi$ as the \emph{factor map}.
On the flip side, one says that $(X,\alpha)$ is an \emph{extension} of $(Y,\beta)$.

Of particular importance for this work is the example given by cubical shifts over a group $G$.
That is, given a natural number $m\geq 1$, the \emph{$m$-dimensional cubical shift} is the action $\sigma: G\curvearrowright ([0,1]^m)^G$ given by $\sigma_g\big( (x_h)_{h\in G} \big)=(x_{g^{-1}h})_{h\in G}$.

Let us now introduce the concepts underpinning this article, as well as some known results from the literature.

\subsection{Almost finiteness}

\begin{defi}
Let $\alpha: G\curvearrowright X$ be an action.
\begin{enumerate}[leftmargin=*,label=$\bullet$]
\item A {\it tower} is a pair $(V,S)$ consisting of a subset $V$ of $X$ and a finite subset $S$ of $G$ such that the sets $\alpha_s(V)$ for $s\in S$ are pairwise disjoint.
\item Given such a tower, the set $V$ is the {\it base} of the tower, the set $S$ is the {\it shape} of the tower, and the sets $\alpha_s(V)$ for $s\in S$ are the {\it levels} of the tower.
\item The tower $(V,S)$ is {\it open} if $V$ is open.
It is called {\it clopen} if $V$ is clopen.
\item A {\it castle} is a finite collection of towers $\{ (V_i , S_i) \}_{i\in I}$
such that for all $i,j\in I$ and $s\in S_i$, $t\in S_j$, we have that $\alpha_{s}(V_i)\cap\alpha_{t}(V_j)=\emptyset$ if $i\neq j$ or $s\neq t$.
\item The castle is {\it open} if each of the towers is open, and {\it clopen} if each of the towers is clopen.
\end{enumerate}
\end{defi}

The following definition originates in \cite[Definition 6.2]{Matui12} for principal ample groupoids, which was then adapted in \cite[Definition 8.2]{Kerr20} for actions of amenable groups on arbitrary spaces.
Although not trivially identical to the general version, the definition below is known to be an equivalent one in our setting due to \cite[Theorem 10.2]{Kerr20}. 

\begin{defi}
Let $\beta: G\curvearrowright Y$ be an action on a totally disconnected space.
We say that $\beta$ is \emph{almost finite}, if for every $K\Subset G$ and $\delta>0$, there exists a clopen castle $\{ (W_i , S_i) \}_{i\in I}$ such that $Y=\bigsqcup_{i\in I} \bigsqcup_{s\in S_i} \beta_s(W_i)$ and for every $i\in I$, the shape $S_i$ is $(K,\delta)$-invariant.
\end{defi}

\begin{rem} \label{rem:almost-finite-actions}
One of the possible ways to view almost finiteness is as a strong topological variant of the Ornstein--Weiss tower lemma \cite[Theorem 5]{OrnsteinWeiss87} that characterizes freeness of probability measure preserving actions in ergodic theory, which was recently strengthened in \cite{CJKMSTD18}.
Conjecturally, every free action $\beta: G\curvearrowright Y$ on a totally disconnected space is almost finite.\footnote{We note that the converse is not true for all groups $G$. In general one can only conclude from almost finiteness that the action is \emph{essentially free}, i.e., sets of the form $\{y\in Y\mid \beta_g(y)=y\}$ vanish under all $\beta$-invariant Borel probability measures. Examples of almost finite but non-free actions are found among generalized Odometers; see \cite{OrtegaScarparo20}.}
This is not so hard to see for $G=\mathbb Z$, as almost finiteness just boils down to the well-known clopen Rokhlin tower lemma for aperiodic homeomorphisms; see for example \cite[Proposition 3]{BezuglyiDooleyMedynets05}.
Although the general case is still open, the following partial results are by now known:
\begin{enumerate}[leftmargin=*,label=$\bullet$]
\item For any amenable group $G$, almost finite actions on the Cantor set are generic among all free minimal $G$-actions; see \cite[Theorem 4.2]{CJKMSTD18}.
\item The conjecture holds when $G$ has local subexponential growth, i.e., given any $F\Subset G$, one has $\lim_{n\to\infty}\frac{|F^{n+1}|}{|F^n|}=1$.
This was shown in \cite{KerrSzabo18} as a consequence of \cite{DownarowiczZhang17}.
\item Let $H\leq G$ be a normal subgroup so that the above conjecture holds for $H$-actions.
If $G/H$ is finite or cyclic, then the conjecture holds for all $G$-actions; see \cite{KerrNaryshkin21}.
In particular, the conjecture is verified for all elementary amenable groups.
\end{enumerate}
\end{rem}

\subsection{Mean dimension}

\begin{defi} \label{def:open-covers}
Given a finite open cover $\mathcal U$ of a topological space $X$, we define its \emph{order} as the minimal number $n\geq 0$ such that every point $x\in X$ is an element of at most $n+1$ members of $\mathcal U$.
If $X$ is equipped with a metric $d$, then $\mesh_d(\mathcal U)$ is defined as the maximal diameter of a member of $\mathcal U$.
Given a constant $\eps>0$, one defines 
\[
\widim_\eps(X,d) = \min\{ \operatorname{ord}(\mathcal U) \mid \mathcal U \text{ is an open cover with } \mesh_d(\mathcal U)\leq\eps\}.
\]
\end{defi}

Before we can define mean dimension, we recall the following technical result, which is a non-trivial consequence of the Ornstein--Weiss quasitiling machinery.

\begin{theorem}[see {\cite[Theorem 6.1]{LindenstraussWeiss00}}] \label{thm:subadditive-convergence}
Let $G$ be a countable amenable group.
Denote by $\Pfin(G)$ the set of all non-empty finite subsets of $G$.
Suppose we are given a function $\phi: \Pfin(G)\to [0,\infty)$ satisfying the following conditions:
\begin{enumerate}[leftmargin=*,label=$\bullet$]
	\item $\phi(F_1) \leq \phi(F_2)$ whenever $F_1 \subseteq F_2$;
	\item $\phi(Fg) = \phi(F)$ for all $F \Subset G$ and $g \in G$;
	\item $\phi(F_1 \cup F_2) \leq \phi(F_1) + \phi(F_2)$ for all $F_1, F_2\Subset G$.
\end{enumerate}
Then there exists $b\geq 0$ such that for every $\eps > 0$ there exists $K\Subset G$ and $\delta > 0$ such that $\big| b - \frac{\phi(F)}{|F|} \big| \leq \eps$ for every $(K, \delta)$-invariant set $F\Subset G$.
\end{theorem}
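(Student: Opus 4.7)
My plan is to set $b := \inf_{F \Subset G} \phi(F)/|F|$, which lies in $[0, \infty)$ since $\phi$ is nonnegative. Because $\phi(F)/|F| \geq b$ for every $F$ by definition, the task reduces to producing, for each prescribed $\eps > 0$, a pair $(K, \delta)$ such that $\phi(F)/|F| < b + \eps$ for every $(K, \delta)$-invariant $F$; only the one-sided upper bound has any substance.

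Given $\eps > 0$, I would first fix a near-minimizer $F^* \Subset G$ with $\phi(F^*)/|F^*| < b + \eps/3$. The central ingredient is the Ornstein--Weiss quasi-tiling lemma, which is the substantive amenability input: for every $\eta > 0$ one can produce a finite family of templates $T_1, \ldots, T_n \Subset G$, built from $F^*$ together with subsequent Følner refinements and arranged (by a bootstrap using the infimum defining $b$) so that $\phi(T_i)/|T_i| < b + \eps/3$ for each $i$, along with an invariance threshold $(K_0, \delta_0)$ such that every $(K_0, \delta_0)$-invariant $F \Subset G$ admits a genuine disjoint partition $F = R \sqcup \bigsqcup_{i,j} F_{i,j}$, where each $F_{i,j}$ lies in some translate $c_{i,j} T_i$ and satisfies $|F_{i,j}| \geq (1 - \eta)|T_i|$, and where the residual obeys $|R| \leq \eta|F|$.

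With this decomposition in hand, the three axioms of $\phi$ close the argument. Iterated subadditivity (derived from the pairwise axiom by induction), monotonicity, and right-invariance yield
\[
\phi(F) \leq \phi(R) + \sum_{i,j} \phi(F_{i,j}) \leq \phi(R) + \sum_{i,j} \phi(T_i).
\]
The tile sum is bounded by $(b + \eps/3)\sum_{i,j}|T_i| \leq (b + \eps/3)|F|/(1 - \eta)$ using the density lower bound on tiles together with disjointness of the $F_{i,j}$. For the residual, I cover $R$ by singletons and use subadditivity together with right-invariance to obtain $\phi(R) \leq |R|\cdot\phi(\{e\}) \leq \eta\,\phi(\{e\})\,|F|$. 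Choosing $\eta$ small enough as a function of $\eps$, $b$, and $\phi(\{e\})$ produces $\phi(F)/|F| < b + \eps$ for all $(K_0, \delta_0)$-invariant $F$, as required.

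The main obstacle is the Ornstein--Weiss quasi-tiling itself, a deep consequence of amenability that controls how any sufficiently invariant finite set can be near-partitioned by translates of a prescribed family of invariant tiles, together with the subtle bootstrap needed to guarantee that those tiles simultaneously inherit the near-minimal ratio from $F^*$. Once that machinery is granted, the remainder of the proof is careful bookkeeping with the three stated axioms.
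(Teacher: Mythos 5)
Your overall quasitiling strategy is the right family of ideas (and is essentially how \cite[Theorem 6.1]{LindenstraussWeiss00} is proved), but the argument as written has a genuine error at its foundation: the identification $b=\inf_{F\Subset G}\phi(F)/|F|$ is false in general, and with it the claim that ``only the upper bound has substance.'' A concrete counterexample: take $G=\mathbb Z$, fix $A=\{0,1,3\}$, and let $\phi(F)$ be the minimal number of translates of $A$ needed to cover $F$. This $\phi$ is monotone, translation-invariant and subadditive, and $\inf_F\phi(F)/|F|=1/3$, attained at $F=A$. However, a short local case analysis shows that in any covering of $\mathbb Z$ (or of a long interval) by translates of $A$, every window of length about $9$ must contain a point covered at least twice; hence coverings of long intervals require at least $\tfrac{1}{3}(1+\tfrac19)|F|$ translates, so the value of $b$ furnished by the theorem satisfies $b\geq 10/27>1/3$. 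So the ``infimum rule'' fails for general monotone, invariant, subadditive $\phi$, and your proposed $b$ is simply not the right constant.

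This error is not cosmetic: it is exactly where your sketch hides the real work. The Ornstein--Weiss quasitiling theorem only lets you tile sufficiently invariant sets $F$ by translates of tiles $T_1,\dots,T_n$ that are themselves prescribed to be highly invariant (members of a F{\o}lner-type hierarchy); you cannot take an arbitrary near-minimizer $F^*$ of the ratio as a template, and the ``bootstrap'' asserting that the tiles can be arranged to satisfy $\phi(T_i)/|T_i|<b+\eps/3$ with $b$ the global infimum is, as the example shows, impossible in general --- indeed, knowing that very invariant sets nearly achieve your $b$ is precisely the statement to be proved, so the argument is circular at that point. The standard proof instead defines $b$ as the liminf of $\phi(F_n)/|F_n|$ along a F{\o}lner sequence (equivalently, the lower limit over increasingly invariant sets), chooses the quasitiling templates from such a sequence with ratios close to this liminf, and then runs your bookkeeping (iterated subadditivity, monotonicity, invariance, singleton bound on the remainder) to show that the upper limit over increasingly invariant sets is at most this liminf; convergence to a single $b$ follows, with no separate lower bound needed. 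If you replace your choice of $b$ and the bootstrap by this, the rest of your outline goes through.
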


\begin{prop}[see {\cite[Proposition 10.4.1]{Coornaert}}]
Let $G$ be a countable amenable group and $\alpha: G\curvearrowright X$ a topological dynamical system.
For a compatible metric $d$ on $X$ and $F\Subset G$, define the metric $d^\alpha_F$ via 
\[
d^\alpha_F(x,y)=\max_{g\in F} \ d(\alpha_g(x),\alpha_g(y)).
\]
Let $\eps>0$ be a constant.
Then the map
\[
\Pfin(G)\ \ni \ F\mapsto \widim_\eps(X,d^\alpha_F)
\]
has the properties as required by \autoref{thm:subadditive-convergence}.
Consequently, if $F_n\Subset G$ is a F{\o}lner sequence, then the limit
\[
\mdim_\eps(X,\alpha,d)=\lim_{n\to\infty} |F_n|^{-1}\widim_\eps(X,d^\alpha_{F_n}) \ \in \ [0,\infty]
\]
exists and is independent of the choice of $(F_n)_n$.
\end{prop}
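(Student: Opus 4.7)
The plan is to verify the three hypotheses of \autoref{thm:subadditive-convergence} for the set function $\phi(F) := \widim_\eps(X, d^\alpha_F)$; the existence of the limit and its independence of the F{\o}lner sequence then follows as an immediate application.

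Monotonicity and translation invariance are routine. If $F_1 \subseteq F_2$ then pointwise $d^\alpha_{F_1} \leq d^\alpha_{F_2}$, so any open cover $\mathcal U$ satisfying $\mesh_{d^\alpha_{F_2}}(\mathcal U) \leq \eps$ a fortiori satisfies $\mesh_{d^\alpha_{F_1}}(\mathcal U) \leq \eps$, and the minimum of $\operatorname{ord}(\mathcal U)$ taken over a larger admissible class can only decrease. For invariance, the direct computation
\[
d^\alpha_{Fg}(x,y) = \max_{h \in Fg} d(\alpha_h(x),\alpha_h(y)) = \max_{f \in F} d(\alpha_{fg}(x),\alpha_{fg}(y)) = d^\alpha_F(\alpha_g(x),\alpha_g(y))
\]
shows that $\alpha_g$ is an isometry from $(X, d^\alpha_{Fg})$ to $(X, d^\alpha_F)$, so pushing open covers across this homeomorphism preserves both mesh and order, giving $\phi(Fg) = \phi(F)$.

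The substantive step is subadditivity. Choose open covers $\mathcal U_i$ realising $\widim_\eps(X, d^\alpha_{F_i}) =: n_i$, with $\operatorname{ord}(\mathcal U_i) = n_i$ and $\mesh_{d^\alpha_{F_i}}(\mathcal U_i) \leq \eps$. Since $d^\alpha_{F_1 \cup F_2} = \max(d^\alpha_{F_1}, d^\alpha_{F_2})$, any common refinement of $\mathcal U_1$ and $\mathcal U_2$ automatically has mesh at most $\eps$ with respect to $d^\alpha_{F_1 \cup F_2}$, so the task reduces to producing such a common refinement of order at most $n_1 + n_2$. The naive refinement $\{U_1 \cap U_2 : U_i \in \mathcal U_i\}$ only yields the unsuitable multiplicative bound $(n_1+1)(n_2+1)-1$, so one must invoke the classical dimension-theoretic additivity of covering dimension: choose partitions of unity subordinate to $\mathcal U_1$ and $\mathcal U_2$, assemble them into a continuous map $X \to |\mathcal N(\mathcal U_1)| \times |\mathcal N(\mathcal U_2)|$ into the product of the nerves (whose dimension is at most $n_1 + n_2$), triangulate this product, and pull back the open star cover of the triangulation. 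The resulting cover has order at most $n_1+n_2$, and by the standard property of the nerve map each of its members is contained in some $U_1 \cap U_2$. This additive bound on the order of a common refinement is the only genuinely nontrivial ingredient and constitutes the main obstacle; once it is in hand, $\widim_\eps(X, d^\alpha_{F_1\cup F_2})\leq n_1+n_2=\phi(F_1)+\phi(F_2)$, and the remainder of the statement follows at once from \autoref{thm:subadditive-convergence} applied to any F{\o}lner sequence.
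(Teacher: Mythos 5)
Your argument is correct and is essentially the standard one: the paper gives no proof of this proposition, citing Coornaert's Proposition 10.4.1 instead, and that cited argument verifies exactly the three properties you check, with subadditivity obtained via the same nerve/open-star construction (the only point you gloss over is that the triangulation of the product of nerves must be taken fine enough --- a subdivision chosen below a Lebesgue number of the cover by products of open stars --- so that each open star of the triangulation is contained in some $U_1\cap U_2$). With that routine refinement made explicit, nothing further is needed.
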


\begin{defi} \label{def:mdim}
Let $G$ be a countable amenable group and $\alpha: G\curvearrowright X$ a topological dynamical system.
The \emph{mean dimension} of $(X, \alpha)$ is defined as
\[
\mdim(X,\alpha) = \sup_{\eps>0} \ \mdim_\eps(X,\alpha,d) \ \in \ [0,\infty],
\]
where $d$ is some compatible metric on $X$.\footnote{This definition contains the implicit claim that this supremum does not depend on the chosen metric. This is not completely trivial, but it is well-known; see \cite[Theorem 10.4.2]{Coornaert}.}
In the cases where the choice of the action $\alpha$ is implicitly clear from context, we just write $\mdim(X)$.
\end{defi}

\begin{example}[see {\cite[Proposition 3.3]{LindenstraussWeiss00}}]
For every natural number $m\geq 1$, we can consider the $m$-dimensional cubical shift $\sigma: G\curvearrowright ([0,1]^m)^G$ as defined before.
Then $\mdim(([0,1]^m)^G)=m$.
\end{example}

\begin{rem}
It is an easy consequence of its definition that mean dimension respects inclusions.
That is, given an equivariant inclusion $X_1\into X_2$ of $G$-spaces, one has the inequality $\mdim(X_1)\leq\mdim(X_2)$.
In light of the above, it follows immediately that mean dimension provides an obstruction to the embeddibility of a $G$-space $X$ into the $m$-dimensional cubical shift.
\end{rem}


\section{The embedding result}

\begin{defi}
Let $(X,d)$ be a compact metric space and $\eps>0$ a constant.
A continuous map $f: X\to Z$ into another topological space is called an \emph{$\eps$-embedding}, if $\operatorname{diam}(f^{-1}(z))<\eps$ for all $z\in Z$.
\end{defi}

The following lemma by Gutman-Tsukamoto plays the same role in our proof of the main result as it did in the proof of theirs.

\begin{lemma}[{\cite[Lemma 2.1]{GutmanTsukamoto14}}] \label{lem:widim}
Let $(X,d)$ be a compact metric space, $m\geq 1$ a natural number and $f_0 :X\to [0,1]^m$ a continuous map.
Suppose that the numbers $\delta,\eps>0$ satisfy the implication
\[
d(x,y)<\eps\quad\implies\quad\|f_0(x)-f_0(y)\|_\infty <\delta.
\]
If $\widim_\eps(X,d) < m/2$, then there exists an $\eps$-embedding $f: X\to [0,1]^m$ satisfying
\[
\|f-f_0\|_\infty:=\max_{x\in X} \|f(x)-f_0(x)\|_\infty <\delta.
\]
\end{lemma}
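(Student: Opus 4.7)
The plan is to reduce the problem to a finite-dimensional general-position argument by using an efficient open cover and a partition-of-unity construction. Specifically, since $\widim_\eps(X,d) < m/2$, there exists a finite open cover $\mathcal{U}=\{U_1,\dots,U_n\}$ of $X$ with $\mesh_d(\mathcal{U}) < \eps$ and $\operatorname{ord}(\mathcal{U}) < m/2$. Pick representatives $x_i \in U_i$ and set $b_i := f_0(x_i)\in [0,1]^m$. The hypothesis guarantees $\|f_0(x)-b_i\|_\infty < \delta$ whenever $x\in U_i$, and by a uniform-continuity argument we can in fact secure a strict margin $\delta' < \delta$ for this inequality.

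Next, I would perturb the chosen vertex images to put them in general position. Fix a small $\eta>0$ (with $\delta'+\eta<\delta$) and choose $a_i\in[0,1]^m$ with $\|a_i-b_i\|_\infty<\eta$ in such a way that any subset of at most $m+1$ of the $a_i$'s is affinely independent. This is possible because the set of such configurations is open and dense in $([0,1]^m)^n$, a standard transversality fact. Then pick a partition of unity $\{\rho_i\}$ subordinate to $\mathcal{U}$ and define the candidate embedding
\[
f(x) \;=\; \sum_{i=1}^n \rho_i(x)\, a_i,
\]
which lands in $[0,1]^m$ because it is a convex combination. For any $x\in X$, $f(x)-f_0(x)=\sum_i \rho_i(x)(a_i-f_0(x))$; since $\rho_i(x)>0$ forces $x\in U_i$, each summand has sup-norm strictly less than $\delta'+\eta < \delta$, giving $\|f-f_0\|_\infty<\delta$.

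It remains to verify the $\eps$-embedding property. Suppose $f(x)=f(y)$ and set $\sigma_x=\{i:\rho_i(x)>0\}$, $\sigma_y=\{i:\rho_i(y)>0\}$. Since $\operatorname{ord}(\mathcal{U})<m/2$ we have $|\sigma_x|,|\sigma_y|\leq\operatorname{ord}(\mathcal{U})+1$, and a case check on the parity of $m$ (the tight case being $\operatorname{ord}(\mathcal{U})\leq\lceil m/2\rceil-1$) shows $|\sigma_x\cup\sigma_y|\leq m+1$. By the general position property, the family $\{a_i\}_{i\in\sigma_x\cup\sigma_y}$ is affinely independent, so the equality $\sum_i\rho_i(x)a_i=\sum_i\rho_i(y)a_i$ forces $\rho_i(x)=\rho_i(y)$ for every $i$. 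In particular $\sigma_x=\sigma_y$, and picking any $i$ in this common support, both $x$ and $y$ lie in $U_i$, so $d(x,y)\leq\mesh_d(\mathcal{U})<\eps$.

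The main obstacle is the combinatorial counting that matches the order-of-cover constraint $\operatorname{ord}(\mathcal{U})<m/2$ with the general-position threshold $m+1$: it is here that the factor of $m/2$ in the hypothesis becomes essential, explaining why Gutman--Tsukamoto's bound is sharp. A secondary technical point is choosing $\eta$ small enough that the general position perturbation does not destroy the $\delta$-closeness to $f_0$, which the preliminary uniform margin $\delta'<\delta$ handles cleanly.
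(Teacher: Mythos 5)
The paper does not prove this lemma at all --- it imports it verbatim from Gutman--Tsukamoto --- and their argument is precisely the nerve-plus-general-position construction you give, so your proposal is correct and follows essentially the same route as the original source. In particular the key counting is right: $\operatorname{ord}(\mathcal U)<m/2$ gives $|\sigma_x\cup\sigma_y|\le 2(\operatorname{ord}(\mathcal U)+1)\le m+1$ (no genuine parity split is needed, since $2k<m$ already forces $2k+2\le m+1$), so affine independence of any $\le m+1$ perturbed vertices forces equal barycentric coordinates and hence a common $U_i$ containing the whole fiber. The only point to watch is a convention mismatch with this paper's definitions: since $\widim_\eps$ is defined here via covers of mesh $\le\eps$, your opening claim of a cover of mesh $<\eps$ is not literally immediate, and with mesh $\le\eps$ the construction only yields fiber diameter $\le\eps$ and needs $d(x,x_i)<\eps$ for the compactness argument producing the margin $\delta'<\delta$; this boundary issue is harmless in every application (and vanishes under Gutman--Tsukamoto's own conventions), but it deserves a sentence if you want the lemma verbatim with the definitions used in this paper.
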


\begin{defi}
Let $(X,\alpha)$ be a topological dynamical system, $m\geq 1$ a natural number and $f: X\to [0,1]^m$ a continuous map.
We then define a continuous equivariant map $I_f: X\to ([0,1]^m)^G$ via $I_f(x)=(f(\alpha_g(x)))_{g\in G}$.
\end{defi}

\begin{lemma} \label{lem:dense-eps-embeddings}
Let $\beta: G\curvearrowright Y$ be an almost finite action on a compact totally disconnected space.
Let $\alpha: G\curvearrowright X$ be an action on a compact metrizable space that arises as an extension of $\beta$ via the factor map $\pi: (X,\alpha)\onto (Y,\beta)$.
Let $m \geq 1$ be a natural number and suppose that $\mdim(X, \alpha) < \frac{m}{2}$.
Choose a compatible metric $d$ on $X$.
Then for any $\eta > 0$ the set of functions
\[
A_{\eta} = \{ f \in \mathcal C(X,[0,1]^m) \mid I_f \times \pi \text{ is an $\eta$-embedding} \}
\]
is dense in $\mathcal C(X,[0,1]^m)$ with respect to $\|\cdot\|_\infty$.
\end{lemma}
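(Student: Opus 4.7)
My plan is to combine the Gutman--Tsukamoto $\varepsilon$-embedding lemma (\autoref{lem:widim}) with a clopen Rokhlin-type tower decomposition of $Y$ coming from almost finiteness, designing the perturbation of $f_0$ separately on each base of a clopen castle pulled back from $Y$ via $\pi$. The key structural observation driving the argument is that for any clopen castle $\{(V_i, S_i)\}_{i \in I}$ on $X$ obtained as the $\pi$-preimage of a castle tiling $Y$, a coincidence $\pi(x) = \pi(y)$ forces $x$ and $y$ to lie in the same level $\alpha_s(V_i)$. Hence distinguishing fibers of $I_f \times \pi$ up to diameter $\eta$ reduces to distinguishing, within each individual base $V_i$, the finite tuples $\bigl(f(\alpha_r(v))\bigr)_{r \in S_i}$ up to diameter $\eta$ in the metric $d^\alpha_{S_i}$.

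Concretely, given $f_0 \in \mathcal{C}(X, [0,1]^m)$ and an approximation tolerance $\delta > 0$, I would first pick $\eta' \in (0, \eta)$ small enough that $d(x,y) < \eta'$ implies $\|f_0(x) - f_0(y)\|_\infty < \delta$. The hypothesis $\mdim(X, \alpha) < m/2$ guarantees $\mdim_{\eta'}(X, \alpha, d) < m/2$, and \autoref{thm:subadditive-convergence} applied to $F \mapsto \widim_{\eta'}(X, d^\alpha_F)$ furnishes $K \Subset G$ and $\delta' > 0$ such that $\widim_{\eta'}(X, d^\alpha_F) < m|F|/2$ for every $(K, \delta')$-invariant $F \Subset G$. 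Almost finiteness of $\beta$ then yields a clopen castle $\{(W_i, S_i)\}_{i \in I}$ tiling $Y$ with each shape $(K, \delta')$-invariant; setting $V_i = \pi^{-1}(W_i)$ gives a clopen castle on $X$ with $X = \bigsqcup_{i \in I}\bigsqcup_{s \in S_i} \alpha_s(V_i)$.

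The function $f$ is now built base-by-base. For each $i \in I$, I apply \autoref{lem:widim} to the compact metric space $(V_i, d^\alpha_{S_i})$, the dimension $m|S_i|$, and the initial map $f_0^{(i)} \colon V_i \to ([0,1]^m)^{S_i}$ given by $v \mapsto \bigl(f_0(\alpha_r(v))\bigr)_{r \in S_i}$. The choice of $\eta'$ ensures the uniform-continuity hypothesis, while the global $\widim$-bound restricts to $\widim_{\eta'}(V_i, d^\alpha_{S_i}) < m|S_i|/2$. This produces an $\eta'$-embedding $g^{(i)} = (g^{(i)}_r)_{r \in S_i}$ with $\|g^{(i)} - f_0^{(i)}\|_\infty < \delta$. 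I then define $f \colon X \to [0,1]^m$ by $f(\alpha_r(v)) = g^{(i)}_r(v)$ on each level $\alpha_r(V_i)$; this is continuous because the castle consists of finitely many clopen pieces, and it automatically satisfies $\|f - f_0\|_\infty < \delta$.

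The verification that $I_f \times \pi$ is an $\eta$-embedding is then immediate: if $(I_f(x), \pi(x)) = (I_f(y), \pi(y))$, the equality of $\pi$-coordinates pins $x, y$ to a common level $\alpha_s(V_i)$, and writing $x = \alpha_s(x')$, $y = \alpha_s(y')$ and evaluating $I_f(x) = I_f(y)$ at group elements of the form $g = rs^{-1}$ with $r \in S_i$ yields $g^{(i)}(x') = g^{(i)}(y')$; the $\eta'$-embedding property of $g^{(i)}$ gives $d^\alpha_{S_i}(x', y') < \eta'$, and since $s \in S_i$ one obtains $d(x,y) \leq d^\alpha_{S_i}(x', y') < \eta' < \eta$. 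I expect the only real subtlety, rather than a genuine obstacle, to be the careful ordering and coupling of parameters $(\eta', K, \delta')$ so that the uniform-continuity requirement of \autoref{lem:widim}, the $\widim$-bound from \autoref{thm:subadditive-convergence}, and the shape-invariance supplied by almost finiteness can all be matched up simultaneously; everything else is a direct unpacking of definitions.
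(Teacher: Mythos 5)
Your proposal is correct and follows essentially the same route as the paper: the same choice of $\eta'$ (the paper's $\eps$), the same use of \autoref{thm:subadditive-convergence} to get $(K,\delta')$-invariant shapes with $\widim_{\eta'}(X,d^\alpha_S)<m|S|/2$, the same pullback castle, the same application of \autoref{lem:widim}, and the same level-by-level definition and verification. The only (harmless) difference is that you apply \autoref{lem:widim} on each base $(V_i,d^\alpha_{S_i})$ rather than on all of $X$, which is fine since $\widim$ only decreases on subspaces and the paper in any case only uses its map on $\pi^{-1}(W_i)$.
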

\begin{proof}
Let $f_0: X\to [0,1]^m$ be a continuous map and let $\eta, \delta>0$.
We shall argue that there exists $f\in A_\eta$ with $\|f-f_0\|_\infty<\delta$.
Since $f_0$ is uniformly continuous, we can find some $0<\eps\leq\eta$ that fits into the implication
\[
d(x,y)<\eps \quad\implies\quad\|f_0(x)-f_0(y)\|_\infty <\delta.
\]
By assumption, we have $\mdim_\eps(X,\alpha,d)\leq\mdim(X,\alpha)<m/2$.
Since $\mdim_\eps(X,\alpha,d)$ arises as a limit in the sense of \autoref{thm:subadditive-convergence}, we can find a constant $\gamma>0$ and $K\Subset G$ such that for every $(K,\gamma)$-invariant set $S\Subset G$, we have $\widim_\eps(X,d^\alpha_S)<|S|m/2$.
Since we assumed $\beta$ to be almost finite, we may find a clopen castle $\{(W_i,S_i)\}_{i\in I}$ with $(K,\gamma)$-invariant shapes and $Y=\bigsqcup_{i\in I}\bigsqcup_{s\in S_i} \beta_s(W_i)$.
By defining the pullbacks $Z_i=\pi^{-1}(W_i)$ for $i\in I$, we obtain the clopen castle $\{(Z_i,S_i)\}_{i\in I}$ partitioning $X$.

Given $i\in I$, we have in particular that $\widim_\eps(X,d^\alpha_{S_i})\leq |S_i|m/2$.
Consider the continuous map $F^0_i: X\to [0,1]^{|S_i|m}\cong ([0,1]^m)^{S_i}$ given by $F_i^0(x)=(f_0(\alpha_s(x)))_{s\in S_i}$.
Note that by design, we have the implication
\[
d^\alpha_{S_i}(x,y)<\eps \quad\implies\quad \|F^0_i(x)-F^0_i(y)\|_\infty <\delta.
\]
Using \autoref{lem:widim}, we may choose a continuous $\eps$-embedding $F_i: X\to ([0,1]^m)^{S_i}$ with respect to the metric $d^{\alpha}_{S_i}$ such that $\|F_i-F_i^0\|_\infty<\delta$.
We now define the continuous function $f: X\to [0,1]^m$ as follows.
If $x\in X$ is a point, choose the unique index $i\in I$ and $s\in S_i$ with $x\in\alpha_s(Z_i)$, and set $f(x)=F_i(\alpha_s^{-1}(x))(s)$.
Since this assignment is clearly continuous on each clopen set belonging to a partition of $X$, $f$ is indeed a well-defined continuous map.
We claim that $\|f-f_0\|_\infty<\delta$ and $f\in A_\eta$.
The first of these properties holds because given $x\in X$ as above, we see that
\[
f(x)=F_i(\alpha_s^{-1}(x))(s) \approx_\delta F_i^0(\alpha_s^{-1}(x))(s)=f_0(\alpha_s(\alpha_s^{-1}(x)))=f_0(x).
\]
So let us argue $f\in A_\eta$.
Suppose $x,y\in X$ are two points such that $(I_f\times\pi)(x)=(I_f\times\pi)(y)$.
Then certainly $\pi(x)=\pi(y)$.
Since the clopen partition $X=\bigsqcup_{i\in I}\bigsqcup_{s\in S_i} \alpha_s(Z_i)$ is the pullback from a clopen partition of $Y$, we see that there is a unique index $i\in I$ and $s\in S_i$ with $x,y\in\alpha_s(Z_i)$.
Since we have $I_f(x)=I_f(y)$, or in other words $f(\alpha_g(x))=f(\alpha_g(y))$ for all $g\in G$, it follows for all $t\in S_i$ that $\alpha_{ts^{-1}}(x), \alpha_{ts^{-1}}(y)\in \alpha_t(Z_i)$, so
\[
F_i(\alpha^{-1}_s(x))(t)= f(\alpha_{ts^{-1}}(x))=f(\alpha_{ts^{-1}}(y))=F_i(\alpha^{-1}_s(x))(t).
\]
Since $t\in S_i$ is arbitrary, it follows that $F_i(\alpha_s^{-1}(x))=F_i(\alpha_s^{-1}(y))$.
Since $F_i$ was an $\eps$-embedding with respect to the metric $d^\alpha_{S_i}$ and $s\in S_i$, we may finally conclude $d(x,y)<\eps\leq\eta$.
This finishes the proof.
\end{proof}

\begin{theorem} \label{thm:embedding-result}
Let $\beta: G\curvearrowright Y$ be an almost finite action on a compact totally disconnected metrizable space.
Let $\alpha: G\curvearrowright X$ be an action on a compact metrizable space that arises as an extension of $\beta$ via the factor map $\pi: (X,\alpha)\onto (Y,\beta)$.
Let $m \geq 1$ be a natural number and suppose that $\mdim(X, \alpha) < \frac{m}{2}$.
Then the set of functions $f\in\mathcal C(X,[0,1]^m)$ for which
\[
I_f\times\pi: (X,\alpha)\to \big( ([0,1]^m)^G\times Y, \sigma\times\beta \big)
\]
is an embedding, is dense with respect to $\|\cdot\|_\infty$.
Consequently, there exists an embedding of $G$-spaces $X\into ([0,1]^{m+1})^G$.
\end{theorem}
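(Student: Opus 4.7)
The plan is to upgrade \autoref{lem:dense-eps-embeddings} from density of $\eta$-embeddings to density of embeddings via a Baire category argument, and then to deduce the embedding into $([0,1]^{m+1})^G$ by absorbing the factor $\pi$ into one extra coordinate.

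First I would observe that the set of $f \in \mathcal C(X,[0,1]^m)$ for which $I_f \times \pi$ is an embedding equals $\bigcap_{n \geq 1} A_{1/n}$: a continuous injection between compact metrizable spaces is automatically an embedding, and $I_f \times \pi$ is injective precisely when it is an $\eta$-embedding for every $\eta > 0$. Since $(\mathcal C(X,[0,1]^m), \|\cdot\|_\infty)$ is a complete metric space, it is Baire, so it suffices to prove each $A_\eta$ is open. For that I would fix a compatible product metric $\rho$ on $([0,1]^m)^G$ --- for example $\rho((y_h),(z_h)) = \sum_{n \geq 1} 2^{-n} \|y_{g_n} - z_{g_n}\|_\infty$ after enumerating $G$ --- which satisfies $\rho(I_g(x), I_f(x)) \leq \|g - f\|_\infty$ uniformly in $x$, together with a metric $d_Y$ on $Y$, and consider the continuous map $\Phi_f : X \times X \to [0,\infty)$ given by
\[
\Phi_f(x,x') = \max\bigl\{ \rho(I_f(x), I_f(x')),\ d_Y(\pi(x), \pi(x')) \bigr\}.
\]
The condition $f \in A_\eta$ is equivalent (using compactness of preimages) to $\Phi_f > 0$ on the compact set $K_\eta = \{(x,x') \in X\times X : d(x,x') \geq \eta\}$, whence by compactness $\Phi_f \geq c$ on $K_\eta$ for some $c > 0$. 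A short case analysis on whether the $d_Y$-component or the $\rho$-component dominates then shows $g \in A_\eta$ whenever $\|g - f\|_\infty < c/4$, verifying openness. Combined with \autoref{lem:dense-eps-embeddings} and Baire's theorem, $\bigcap_{n \geq 1} A_{1/n}$ is dense $G_\delta$, establishing the main density claim.

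For the second assertion I would choose any $f$ in this intersection, giving an equivariant embedding $I_f \times \pi : X \into ([0,1]^m)^G \times Y$. Since $Y$ is compact metrizable and totally disconnected, it embeds topologically into the Cantor set and therefore into $[0,1]$; let $\iota_0 : Y \into [0,1]$ be any such embedding. I would then consider $f' = (f, \iota_0 \circ \pi) \in \mathcal C(X, [0,1]^{m+1})$. Under the natural identification $([0,1]^m)^G \times [0,1]^G \cong ([0,1]^{m+1})^G$, the induced equivariant continuous map $I_{f'}$ decomposes as $I_{f'}(x) = (I_f(x), I_{\iota_0 \circ \pi}(x))$. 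If $I_{f'}(x_1) = I_{f'}(x_2)$, then $I_f(x_1) = I_f(x_2)$, and evaluating the second component at the identity $e \in G$ gives $\iota_0(\pi(x_1)) = \iota_0(\pi(x_2))$, whence $\pi(x_1) = \pi(x_2)$ by injectivity of $\iota_0$; injectivity of $I_f \times \pi$ then forces $x_1 = x_2$. Thus $I_{f'}$ is the desired $G$-space embedding $X \into ([0,1]^{m+1})^G$.

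The only genuinely technical step is the openness verification, which is routine compactness but somewhat notation-heavy; everything else is a clean assembly of \autoref{lem:dense-eps-embeddings}, Baire category, and the observation that any topological embedding $Y \into [0,1]$ is automatically promoted to an equivariant embedding $Y \into [0,1]^G$ via $y \mapsto I_{\iota_0}(y)$, letting us absorb the $Y$-factor into a single additional cubical dimension.
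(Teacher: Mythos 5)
Your proposal is correct and follows essentially the same route as the paper: identify the embedding set with $\bigcap_{n\geq 1} A_{1/n}$, prove each $A_\eta$ is open via a compactness argument with a compatible product metric on $([0,1]^m)^G$, apply Baire category together with \autoref{lem:dense-eps-embeddings}, and absorb the factor $Y$ into one extra coordinate through an embedding $Y\into[0,1]$. The only differences (taking a maximum with a metric on $Y$ rather than restricting to pairs with equal $\pi$-image, and packaging the final step as $I_{f'}$ with $f'=(f,\iota_0\circ\pi)$) are cosmetic.
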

\begin{proof}
Let us first explain the last sentence of the claim.
Since $Y$ is totally disconnected, it can be embedded into $[0,1]$, say via a continuous map $\psi$.
This implies that $\bar{\psi}: Y\to [0,1]^G$ given by $y\mapsto(\psi(\beta_g(y)))_{g\in G}$ is an equivariant embedding.
So assuming the rest of the claim holds, we obtain a chain of embeddings of $G$-spaces
\[
X\stackrel{I_f\times\pi}{\longrightarrow} ([0,1]^m)^G\times Y \stackrel{\operatorname{id}\times\bar{\psi}}{\longrightarrow} ([0,1]^{m})^G\times ([0,1])^G \cong ([0,1]^{m+1})^G.
\]

If we adopt the notation from \autoref{lem:dense-eps-embeddings}, it is clear that the set of functions in question is equal to the intersection $\bigcap_{n\geq 1} A_{1/n}$.
In light of the fact that $\mathbb C(X,[0,1]^m)$ a closed subset of the Banach space $\mathcal C(X,\mathbb R^m)$ with respect to $\|\cdot\|_\infty$, the claim follows immediately from the Baire category theorem if we show that the sets $A_\eta$ are open for all $\eta>0$.

So let us briefly argue that this is the case.
Recall that we have chosen a compatible metric $d$ on $X$.
Let $f\in A_\eta$.
Given an infinite tuple $(c_g)_{g\in G}$ of strictly positive numbers with $\sum_{g\in G} c_g=1$, we define the constant $\delta$ via
\[
2\delta = \inf\Big\{ \sum_{g\in G} c_g\|f(\alpha_g(x))-f(\alpha_g(y)) \|_\infty \Big| x,y\in X, \pi(x)=\pi(y), d(x,y)\geq\eta \Big\}.
\]
Keep in mind that the assignment 
\[
\big( (z^{(1)}_g)_{g\in G}, (z^{(2)}_g)_{g\in G} \big) \mapsto\sum_{g\in G} c_g\|z^{(1)}_g-z^{(2)}_g\|_\infty
\]
defines a compatible metric on $([0,1]^m)^G$.
Since $I_f$ is continuous, $I_f\times\pi$ is an $\eta$-embedding and $X$ is compact, it follows that $\delta>0$.
We claim that the open $\delta$-ball around $f$ is contained in $A_\eta$.
Indeed, let $f_0\in\mathcal C(X,[0,1]^m)$ with $\|f-f_0\|<\delta$.
Suppose that $x,y\in X$ satisfy $(I_{f_0}\times\pi)(x)=(I_{f_0}\times\pi)(y)$.
Then $\pi(x)=\pi(y)$ and it follows from the triangle inequality that 
\[
\sum_{g\in G} c_g\|f(\alpha_g(x))-f(\alpha_g(y))\|_\infty < \sum_{g\in G} c_g(2\delta+\|f_0(\alpha_g(x))-f_0(\alpha_g(y))\|_\infty) = 2\delta.
\]
By the definition of $\delta$, it follows that $d(x,y)<\eta$.
Since $x$ and $y$ were arbitrary, we conclude $f_0\in A_\eta$ and the proof is finished.
\end{proof}

We also record an improved version of the embedding result, which is an immediate consequence of the above if we assume more about the system $(Y,\beta)$.

\begin{cor} \label{cor:optimal-embedding}
Let $\beta: G\curvearrowright Y$ be an almost finite action on a compact totally disconnected metrizable space.
Suppose that $\beta$ is a subshift of finite type, i.e., there exists some natural number $\ell\geq 2$ and an embedding $Y\into\{1,\dots,\ell\}^G$ of $G$-spaces.
Let $\alpha: G\curvearrowright X$ be an action on a compact metrizable space that arises as an extension of $\beta$.
Let $m \geq 1$ be a natural number and suppose that $\mdim(X, \alpha) < \frac{m}{2}$.
Then there exists an embedding of $G$-spaces $X\into ([0,1]^{m})^G$.
\end{cor}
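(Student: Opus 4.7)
The plan is to reduce to \autoref{thm:embedding-result} and absorb the extra coordinate appearing there into the cubical factor $([0,1]^m)^G$, using the finite-alphabet structure of $Y$ as leverage. The essential observation is that the extra dimension in \autoref{thm:embedding-result} arose only to embed the totally disconnected $Y$ into $[0,1]^G$; if $Y$ already sits in $\{1,\dots,\ell\}^G$ via $\iota$, then this embedding is determined by a \emph{single} locally constant function $\psi: Y\to\{1,\dots,\ell\}$, $\psi(y) = \iota(y)_e$, together with the action. Indeed, equivariance forces $\iota(y) = (\psi(\beta_{g^{-1}}(y)))_{g\in G}$, so the continuous function $\psi\circ\pi: X\to\{1,\dots,\ell\}$ together with $\alpha$ determines $\pi$.

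With this in mind, I would first apply \autoref{thm:embedding-result} to obtain some continuous $f: X\to[0,1]^m$ for which $I_f\times\pi: X\to ([0,1]^m)^G\times Y$ is an equivariant embedding. I would then perturb $f$ into a new map $\tilde{f}: X\to[0,1]^m$ whose first coordinate simultaneously encodes $f(x)_1$ and the letter $\psi(\pi(x))$. Fixing pairwise disjoint closed subintervals $J_1,\dots,J_\ell\subset [0,1]$ with affine homeomorphisms $T_j: [0,1]\to J_j$, set
\[
\tilde{f}(x) = \big( T_{\psi(\pi(x))}(f(x)_1),\ f(x)_2,\ \dots,\ f(x)_m \big),
\]
which is continuous because $\psi\circ\pi$ is locally constant on the clopen partition of $X$ induced by the pullback of $\psi^{-1}(j)$ under $\pi$.

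It remains to verify that the orbit map $I_{\tilde{f}}: X\to ([0,1]^m)^G$ is injective. If $I_{\tilde{f}}(x) = I_{\tilde{f}}(y)$, then for every $g\in G$ the first coordinate of $\tilde{f}(\alpha_g x)$ lies in the unique subinterval $J_{\psi(\pi(\alpha_g x))}$; disjointness of the $J_j$ forces $\psi(\beta_g\pi(x)) = \psi(\beta_g\pi(y))$ for all $g$, so by the first step $\pi(x)=\pi(y)$. Knowing the common value of $\psi\circ\pi$ at each shifted point, one inverts the appropriate $T_j$ on the first coordinate and reads off the rest directly to recover $f(\alpha_g x) = f(\alpha_g y)$ for every $g$; hence $I_f(x)=I_f(y)$, and the embedding property of $I_f\times\pi$ finally gives $x=y$. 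I expect no substantial obstacle beyond recognizing this encoding trick: once the locally constant label is folded into one coordinate of the continuous map, no further dynamical input is needed and all remaining steps are routine checks.
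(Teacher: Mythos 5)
Your proposal is correct and is essentially the paper's own argument: both absorb the finite alphabet of $Y\into\{1,\dots,\ell\}^G$ into a single cube coordinate via an embedding $[0,1]\times\{1,\dots,\ell\}\into[0,1]$ (your disjoint intervals $J_j$ with the maps $T_j$), applied componentwise along the orbit, on top of \autoref{thm:embedding-result}. The only difference is packaging --- the paper post-composes $I_f\times\pi$ with a chain of equivariant embeddings, while you fold the symbol into $f$ itself and re-verify injectivity of $I_{\tilde f}$ by hand --- which amounts to the same map.
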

\begin{proof}
Find some embedding $\varphi: [0,1]\times\{1,\dots,\ell\}\into [0,1]$, which gives rise to an equivariant embedding 
\[
\bar{\varphi}: [0,1]^G\times\{1,\dots,\ell\}^G \cong \big([0,1]\times\{1,\dots,\ell\}\big)^G \into [0,1]^G
\]
by applying $\varphi$ componentwise.
This allows us to proceed exactly as in the last part of \autoref{thm:embedding-result}, except that we may appeal to the embedding
\[
\begin{array}{ccl}
([0,1]^m)^G\times Y &\into& ([0,1]^m)^G\times\{1,\dots,\ell\}^G \\
&\cong& ([0,1]^{m-1})^G\times \big([0,1]\times\{1,\dots,\ell\}\big)^G \\
&\stackrel{\operatorname{id}\times\bar{\varphi}}{\longrightarrow}& ([0,1]^{m-1})^G\times ([0,1])^G\ \cong \ ([0,1]^{m})^G.
\end{array}
\]
\end{proof}

\begin{rem}
In light of the fact that almost finiteness is a concept that can be defined for actions on arbitrary spaces, one might wonder how far the main result of this note can be generalized.
Suppose $\gamma: G\curvearrowright Z$ is an almost finite action on a not necessarily disconnected space.
It is then well-known that $\gamma$ has the small boundary property and therefore also $\mdim(Z,\gamma)=0$; see \cite[Theorem 5.6]{KerrSzabo18} and \cite[Theorem 5.4]{LindenstraussWeiss00}.\footnote{Note that a priori, this reference in \cite{KerrSzabo18} assumes freeness of the action. However, the statement involves an ``if and only if'' statement where we only need the ``if'' part, which does not need freeness of the involved action in any way.} 
Can one prove directly that $(Z,\gamma)$ embeds into the 1-dimensional cubical shift?
If so, is the statement of \autoref{thm:embedding-result} true if we replace $\beta: G\curvearrowright Y$ by $\gamma: G\curvearrowright Z$?

Although this would seem plausible, the proof does by no means generalize in any obvious way to this more general case.
The first named author has proved a partial result in this direction in his master thesis \cite{Lanckriet21}, namely under the assumption that $Z$ has finite covering dimension $d$.
In that case, a version of \autoref{thm:embedding-result} is true, where the conclusion is weakened to obtain an embedding into the $(m(d+2)+1)$-dimensional cubical shift.
Since this dimensional upper bound is far from what we expect to be optimal, and since it does not actually recover \autoref{thm:embedding-result} as a special case, we decided not to include this generalized approach in this note.
\end{rem}


\textbf{Acknowledgements.} 
The second named author has been supported by research project C14/19/088 funded by the research council of KU Leuven, and the project G085020N funded by the Research Foundation Flanders (FWO).


\end{document}